\documentclass[letterpaper,11pt,twoside,keywordsasfootnote,addressatend,noinfoline]{article}
\usepackage{fullpage}
\usepackage[english]{babel}
\usepackage{amssymb}
\usepackage{amsmath}
\usepackage{theorem}
\usepackage{epsfig}
\usepackage{subfigure}
\usepackage{mathrsfs}
\usepackage{color}
\usepackage{imsart}

\setcounter{totalnumber}{1}
\theorembodyfont{\slshape}

\newtheorem{theorem}{Theorem}

\newtheorem{lemma}[theorem]{Lemma}

\newenvironment{proof}{\noindent{\bf Proof.}}{\hspace*{2mm}~$\square$}

\newcommand{\R}{\mathbb{R}}

\newcommand{\A}{\mathscr{A}}
\newcommand{\C}{\mathscr{C}}
\newcommand{\E}{\mathscr{E}}
\newcommand{\F}{\mathscr{F}}
\newcommand{\G}{\mathscr{G}}
\newcommand{\V}{\mathscr{V}}
\newcommand{\n}{\hspace*{-6pt}}
\newcommand{\ep}{\epsilon}

\newcommand{\norm}[1]{|\!|#1|\!|}
\newcommand{\bignorm}[1]{\bigg|\!\bigg|#1 \,\bigg|\!\bigg|}

\renewcommand{\r}{\mathbf{r}}
\renewcommand{\c}{\mathbf{c}}

\DeclareMathOperator{\card}{card}
\DeclareMathOperator{\uniform}{Uniform}
\DeclareMathOperator{\conv}{Conv}
\DeclareMathOperator{\Int}{Int}


\begin{document}

\begin{frontmatter}
\title     {Consensus in the Hegselmann-Krause model}
\runtitle  {Consensus in the Hegselmann-Krause model}
\author    {Nicolas Lanchier\thanks{Nicolas Lanchier was partially supported by NSF grant CNS-2000792.} and Hsin-Lun Li}
\runauthor {Nicolas Lanchier and Hsin-Lun Li}
\address   {School of Mathematical and Statistical Sciences \\ Arizona State University, Tempe, AZ 85287, USA. \\ nicolas.lanchier@asu.edu \\ hsinlunl@asu.edu}

\maketitle

\begin{abstract} \ \
 This paper is concerned with the probability of consensus in a multivariate, spatially explicit version of the Hegselmann-Krause model for the dynamics of opinions.
 Individuals are located on the vertices of a finite connected graph representing a social network, and are characterized by their opinion, with the set of opinions~$\Delta$ being a general bounded convex subset of a finite dimensional normed vector space.
 Having a confidence threshold~$\tau$, two individuals are said to be compatible if the distance~(induced by the norm) between their opinions does not exceed the threshold~$\tau$.
 Each vertex~$x$ updates its opinion at rate the number of its compatible neighbors on the social network, which results in the opinion at~$x$ to be replaced by a convex combination of the opinion at~$x$ and the nearby opinions:~$\alpha$ times the opinion at~$x$ plus~$(1 - \alpha)$ times the average opinion of its compatible neighbors.
 The main objective is to derive a lower bound for the probability of consensus when the opinions are initially independent and identically distributed with values in the opinion set~$\Delta$.
\end{abstract}

\begin{keyword}[class=AMS]
\kwd[Primary ]{60K35}
\end{keyword}

\begin{keyword}
\kwd{Interacting particle systems, Hegselmann-Krause model, opinion dynamics, martingale, martingale convergence theorem, optional stopping theorem, confidence threshold, consensus.}
\end{keyword}

\end{frontmatter}


\section{Introduction}
 As illustrated by the popular Axelrod model~\cite{axelrod_1997}, opinion and cultural dynamics are driven by two key components:
 social influence, the tendency of individuals to become more similar when they interact, and homophily, the tendency to interact more frequently with individuals who are more similar~\cite{castellano_fortunato_loreto_2009}.
 Another important component in the context of social dynamics is the topology of the social network whose connections indicate who interacts with whom, an aspect that is well captured by stochastic models known as interacting particle systems~\cite{liggett_1985, liggett_1999}. \vspace*{5pt} \\
{\bf The voter model}.
 The simplest example of interacting particle system modeling the dynamics of opinions is the voter model introduced independently in~\cite{clifford_sudbury_1973, holley_liggett_1975}.
 Individuals located on the vertices of a connected graph~(interpreted as a social network) are characterized by one of two competing opinions, and update their opinion independently at rate one by mimicking one of their neighbors chosen uniformly at random.
 In particular, this model accounts for the topology of the social network as well as social influence, but not homophily.
 Most of the results about the voter model rely on a duality relationship with a system of coalescing random walks.
 When the individuals are located on the~$d$-dimensional integer lattice, the system clusters, and so a local consensus is reached, in dimensions~$d \leq 2$, whereas the process converges to a nontrivial stationary distribution, and so the two opinions coexist at equilibrium, in higher dimensions~\cite{holley_liggett_1975}.
 The asymptotic behavior of the cluster size for the one- and the two-dimensional voter models is studied in~\cite{bramson_griffeath_1980, cox_griffeath_1986}.
 In higher dimensions, even though the two opinions coexist at equilibrium, as time evolves, spatial correlations build up due to the presence of local interactions.
 The spatial correlations in the infinite time limit are studied in~\cite{bramson_griffeath_1979, zahle_2001}.
 On finite connected graphs, the voter model always reaches a consensus, and the time to consensus on the large~$d$-dimensional torus is studied in~\cite{cox_1989}. \vspace*{5pt} \\
\newpage \noindent
{\bf Threshold voter models}.
 Natural variants of the voter model that also describe the dynamics of opinions on social networks are the threshold voter models.
 In this case, individuals switch their opinion at rate one if and only if they disagree with at least~$\theta$ of their neighbors.
 Looking again at the process on the~$d$-dimensional integer lattice, when the threshold is equal to one, coexistence occurs except in one dimension~\cite{liggett_1994}.
 While increasing the threshold~$\theta$, the process exhibits two phase transitions, from coexistence to clustering, and then from clustering to fixation where both opinions again coexist, but coexistence is now due to the system becoming frozen locally as opposed to convergence to a unique nontrivial stationary distribution.
 Fixation occurs in particular when the individuals switch their opinion if and only if they disagree with a strict majority of their neighbors~\cite{durrett_steif_1993}, a particular case referred to as the majority vote model. \vspace*{5pt} \\
{\bf The majority rule model}.
 Another process related to the majority vote model that includes the simultaneous updates of multiple individuals is the majority rule model~\cite{galam_2002}.
 In the original version of this model, the population is finite, individuals are again characterized by one of two competing opinions, and a single update consists in choosing a random number of individuals uniformly~(referred to as a discussion group), which results in all the individuals in the group adopting the majority opinion within the group before the interaction.
 In case of a tie, all the individuals adopt a preferred opinion representing the status quo.
 A spatial version of the majority rule model where individuals are located on the integer lattice is introduced in~\cite{lanchier_neufer_2013}.
 The discussion groups consist of all the~$n \times \cdots \times n$ hypercubes and are chosen to be updated independently at rate one.
 In this case, the behavior depends on the parity of~$n$: the system clusters with the density of each opinion being preserved when~$n$ is odd whereas the status quo wins when~$n$ is even. \vspace*{5pt} \\
{\bf The Axelrod model}.
 The classical and threshold voter models and the majority rule model account for social influence but not homophily, the tendency to interact more frequently with individuals who are more similar.
 The most popular interacting particle system that includes both social influence and homophily is the Axelrod model~\cite{axelrod_1997}.
 Individuals are now characterized by one of~$q$ possible opinions about~$F$ different issues, which results in~$q^F$ possible vectors of opinions representing cultural profiles.
 Homophily is modeled by assuming that neighbors interact at a rate equal to the fraction of opinions or cultural features they have in common while social influence is modeled by matching one of the opinions the two neighbors do not share, if any.
 Depending on the values of~$F$ and~$q$, the system either clusters so that a local consensus is reached or fixates in a fragmented configuration where disagreements persist.
 Intuitively, increasing the value of~$F$ promotes consensus while increasing the value of~$q$ promotes discordance.
 For the one-dimensional model where the individuals are located on the integers, consensus is studied in~\cite{lanchier_2012a, lanchier_schweinsberg_2012} while discordance is studied in~\cite{lanchier_scarlatos_2013}.
 See also~\cite{li_2014} for a study of the two-dimensional process. \vspace*{5pt} \\
{\bf Constrained voter models}.
 Homophily in the Axelrod model is incorporated by assuming that the rate at which individuals interact is proportional to the amount of agreement between neighbors.
 In many other models, this component is modeled instead through the inclusion of a so-called confidence threshold: neighbors either interact at a constant rate or do not interact at all depending on whether their level of disagreement is within a reasonable threshold or not.
 The simplest such model is the constrained voter model~\cite{vazquez_krapivsky_redner_2003} where individuals are either leftist, centrist or rightist, and mimic a randomly chosen neighbor unless one of the individuals is leftist and the other one rightist.
 This model has been generalized in~\cite{lanchier_scarlatos_2017} where the set of opinions is represented by the vertices of a general finite connected graph and where the level of disagreement is measured using the geodesic distance on the opinion graph:
 neighbors interact at rate one, which results in one neighbor mimicking the other neighbor if and only if the geodesic distance~(or graph distance) between their opinions does not exceed some confidence threshold~$\tau$.
 It is proved in~\cite{lanchier_scarlatos_2017} that the one-dimensional process clusters when the confidence threshold exceeds the radius of the opinion graph while sufficient conditions for fixation in a fragmented configuration are also given.
 A lower bound for the probability of consensus for the process where individuals are located on a finite connected graph is derived in~\cite{hardin_lanchier_2021}.
 The constrained voter model~\cite{vazquez_krapivsky_redner_2003} corresponds to the particular case where the opinion graph consists of the path with three vertices and two edges and where~$\tau = 1$ while the particular case where the opinion graph consists of the hypercube with~$2^n$ vertices is referred to as the vectorial Deffuant model introduced in~\cite{deffuant_2000} and studied in~\cite{lanchier_scarlatos_2014}. \vspace*{5pt} \\
{\bf The Deffuant model}.
 In another~(more popular) version of the Deffuant model~\cite{deffuant_2000}, the opinion space is uncountable and consists of the unit interval.
 Edges become active independently at rate one and the two individuals connected by that edge interact if and only if the distance between their opinions does not exceed some confidence threshold~$\tau$, which results in the two neighbors' opinions getting closer to each other.
 If the opinions are initially independent and uniform, it is conjectured that the system on infinite graphs converges to a consensus when~$\tau > 1/2$ whereas disagreements persist when~$\tau < 1/2$.
 This conjecture was first established in~\cite{lanchier_2012b} for the one-dimensional process while~\cite{haggstrom_2012} gave an alternative simpler proof.
 Multivariate versions of the model where the opinion space consists of a bounded convex subset of a normed vector space are also studied in~\cite{hirscher_2014} for the one-dimensional system and in~\cite{lanchier_li_2020} for the process on general finite connected graphs. \vspace*{5pt} \\
{\bf The Hegselmann-Krause model}.
 The Hegselmann-Krause (HK) model~\cite{hegselmann_krause_2002} is a fairly general interacting particle system.
 The opinion space again consists of the unit interval but, in contrast with the Deffuant model which evolves via pairwise interactions, the new opinion at a vertex~$x$ is now replaced by a weighted average of all the opinions in the population that are within some confidence threshold~$\tau$ of the opinion at~$x$.
 General conditions under which certain initial configurations and certain sets of parameters can lead to asymptotic stability and/or consensus are discussed in~\cite{li_2020, li_2021}.
 In the version of the HK-model considered in~\cite{castellano_fortunato_loreto_2009}, the matrix of the weights between individuals is chosen to be the adjacency matrix of a graph~(the weight is one if the two individuals are connected by an edge and zero otherwise) thus turning the model into a spatial model:
 Individuals are located on the vertices of a graph representing a social network, and each update at vertex~$x$ results in the new opinion at~$x$ to be the average of the opinions of the neighbors that are compatible with~$x$~(at opinion distance less than~$\tau$).
 The main objective of this paper is to derive a lower bound for the probability of consensus for a multivariate version of this model.


\section{Model description and results}
 To define the model rigorously, let~$\G = (\V, \E)$ be a finite connected graph representing a social network: each vertex is occupied by an individual and any two individuals are connected by an edge
 if and only if they are friends.
 Individuals are characterized by their opinion, and we assume that the opinions are initially independent and distributed across a bounded convex subset~$\Delta \subset \R^n$, thus extending the opinion space~$[0, 1]$ of the original HK-model.
 To measure the disagreements between neighbors and define the dynamics of the process, we also assume that~$\R^n$ is equipped with an arbitrary norm~$\norm{\cdot}$.
 The state of the process at time~$t$ is a spatial configuration
 $$ \xi_t : \V \to \Delta \quad \hbox{where} \quad \xi_t (x) = \hbox{opinion at vertex~$x$ at time~$t$}. $$
 Having a confidence threshold~$\tau > 0$, two neighbors are said to be compatible at time~$t$ if the distance~(induced by the norm) between their opinions does not exceed this threshold, and we denote by~$N_t (x)$ the set of neighbors of~$x$ that are compatible with~$x$, i.e.,
 $$ N_t (x) = \{y \in \V : (x, y) \in \E \ \hbox{and} \ \norm{\xi_t (x) - \xi_t (x)} \leq \tau \}. $$
 The process we study evolves as follows:
 Individuals update their opinion independently at rate the number of their compatible neighbors, the new opinion being equal to the average opinion of their compatible neighbors.
 More precisely, letting~$\F_t$ be the natural filtration of the process,
\begin{equation}
\label{eq:HK1}
  \lim_{\ep \to 0} \ \frac{1}{\ep} \ P \bigg(\xi_{t + \ep} (x) =  \frac{1}{|N_t (x)|} \sum_{y \in N_t (x)} \xi_t (y) \ \Big| \ \F_t \bigg) = |N_t (x)|
\end{equation}
 when~$N_t (x) \neq \varnothing$ whereas the transition rate is zero when~$x$ has no compatible neighbors.
 These transitions indicate that the individuals are open-minded in the sense that the new opinion only depends on the opinions of the compatible neighbors.
 Following~\cite{li_2020, li_2021}, we can include more realistically a degree of stubbornness by assuming that the new opinion is given by
 $$ \alpha \,\xi_t (x) + \frac{1 - \alpha}{|N_t (x)|} \sum_{y \in N_t (x)} \xi_t (y) \quad \hbox{where} \quad \alpha \in [0, 1]. $$
 The parameter~$\alpha$ represents the degree of stubbornness of the individuals.
 To simplify the algebra, we assume from now on that~$\alpha = 0$, which corresponds to model~\eqref{eq:HK1}, but we point out that all our proofs and our main result easily extend to the model with stubbornness.
 The main objective is to study the probability of consensus, i.e., the probability that all the opinions converge to the same limit, when starting from a configuration where the opinions are independent and identically distributed with values in the opinion space~$\Delta$.
 To state our result, let
 $$ \begin{array}{l} \C = \Big\{\lim_{t \to \infty} \max_{x, y \in \V} \norm{\xi_t (x) - \xi_t (y)} = 0 \Big\} \end{array} $$
 be the consensus event.
 We also define
 $$ \begin{array}{rcl}
     \r & \n = \n & \inf \{r > 0 : \Delta \subset B (c, r) \ \hbox{for some} \ c \in \Delta \} \vspace*{4pt} \\
        & \n = \n & \inf \{r > 0 : \sup_{a \in \Delta} \norm{a - c} \leq r \ \hbox{for some} \ c \in \Delta \}, \end{array} $$
 which can be viewed as the radius of the opinion space, and
 $$ \c \in \Delta \quad \hbox{such that} \quad \Delta \subset B (\c, \r), $$
 which can be viewed as the center of the opinion space.
 Throughout this paper, we also assume that the opinions across the social network are intially independent and identically distributed, and we let~$X$ be the~$\Delta$-valued random variable describing the initial opinions:
 $$ P (\xi_0 (x) \in B) = P (X \in B) \quad \hbox{for all} \quad x \in \V \ \hbox{and} \ \hbox{Borel set} \ B \subset \Delta. $$
 Then, we have the following universal lower bound that holds uniformly over all possible finite connected graphs~(social networks).
\begin{theorem}[probability of consensus] --
\label{th:consensus}
 For all~$\tau > \r$, $P (\C) \geq 1 - E \,\norm{X - \c} / (\tau - \r)$.
\end{theorem}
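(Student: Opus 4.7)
The plan is to apply Markov's inequality to the limit of the nonnegative supermartingale
$$ \Phi_t = \sum_{x \in \V} \norm{\xi_t(x) - \c}, $$
and to combine this with a geometric observation that forces $\Phi_\infty$ to be large on the non-consensus event. A jump at $x$ occurs at rate $|N_t(x)|$ and replaces $\xi_t(x)$ by $\bar\xi_t(x) = |N_t(x)|^{-1} \sum_{y \in N_t(x)} \xi_t(y)$; by the triangle inequality, the contribution of this jump to $L \Phi_t$ is at most $\sum_{y \in N_t(x)} \norm{\xi_t(y) - \c} - |N_t(x)| \, \norm{\xi_t(x) - \c}$. Summing over $x$ and using the symmetry $y \in N_t(x) \Leftrightarrow x \in N_t(y)$, the positive and negative parts cancel and $L \Phi_t \leq 0$. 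Because $B(\c, \r)$ is convex and contains $\Delta$, it is forward invariant under the dynamics (each new opinion is a convex combination of opinions already in $B(\c, \r)$), so $\Phi_t \leq |\V| \r$ is bounded; hence $\Phi_t \to \Phi_\infty$ almost surely and $E \Phi_\infty \leq E \Phi_0 = |\V| \, E \norm{X - \c}$.

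Next, I would show that for each $x$ the opinion $\xi_t(x)$ converges almost surely to a limit $\xi_\infty(x)$, and that $\xi_\infty$ has the following fixed-point structure: $\V$ partitions into the connected components $C_1, \ldots, C_k$ of the limiting compatibility subgraph, whose edges are social edges $(x,y)$ with $\norm{\xi_\infty(x) - \xi_\infty(y)} \leq \tau$, and $\xi_\infty \equiv a_i \in \Delta$ is constant on each $C_i$ because a harmonic function on a finite connected graph is constant. This is the analog for our continuous-time stochastic HK dynamics of the well-known stabilization theorem for the deterministic HK model; I would establish it using a strictly decreasing second-moment Lyapunov function (in any Euclidean norm equivalent to $\norm{\cdot}$) to force the jump sizes to vanish. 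The key geometric observation is then that on the event $\C^c$, where $k \geq 2$, every $a_i$ must satisfy $\norm{a_i - \c} > \tau - \r$: otherwise, for any social neighbor $y$ of any $x \in C_i$ we would have
$$ \norm{\xi_\infty(y) - a_i} \leq \norm{\xi_\infty(y) - \c} + \norm{\c - a_i} \leq \r + (\tau - \r) = \tau, $$
so $(x, y)$ would be an edge of the limiting compatibility subgraph and hence $y \in C_i$; connectedness of $\G$ would then imply $C_i = \V$ and $k = 1$, contradicting $\C^c$.

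Combining these two steps gives $\Phi_\infty = \sum_{i = 1}^{k} |C_i| \, \norm{a_i - \c} > |\V|(\tau - \r)$ on $\C^c$, so by Markov's inequality and $E \Phi_\infty \leq E \Phi_0$,
$$ P(\C^c) \leq P(\Phi_\infty \geq |\V|(\tau - \r)) \leq \frac{E \Phi_\infty}{|\V|(\tau - \r)} \leq \frac{E \norm{X - \c}}{\tau - \r}. $$
The main obstacle will be the intermediate step of establishing almost-sure convergence of each $\xi_t(x)$ together with the fixed-point structure of $\xi_\infty$; the supermartingale computation and the geometric observation are both short, but the stabilization argument must be handled with some care for the continuous-time, vector-valued dynamics with an arbitrary norm.
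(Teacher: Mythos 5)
Your outline is correct, and it reaches the theorem by a genuinely different route from the paper. The supermartingale computation for $\Phi_t = X_t(\c)$ is exactly the paper's Lemma~\ref{lem:supermartingale}, and both arguments need the same two structural facts: almost sure convergence $\xi_t \to \xi_\infty$ and constancy of $\xi_\infty$ on the components of the limiting compatibility graph $\G_\infty$ (the paper's Lemmas~\ref{lem:limit} and~\ref{lem:constant}). After that the two proofs diverge. The paper introduces the stopping times $T_{\ep}$, proves they are almost surely finite, shows by a chain/induction argument that the event $\A$ that some opinion is within $\tau - \r - \ep'$ of $\c$ at time $T_{\ep}$ forces consensus, applies the optional stopping theorem to bound $P(\A^c)$, and finally lets $\ep' \to 0$. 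You instead work directly in the limit: on $\C^c$ every component value $a_i$ must satisfy $\norm{a_i - \c} > \tau - \r$ (your short triangle-inequality argument, which correctly uses connectedness of $\G$ to propagate membership in $C_i$), so $\Phi_\infty > \card(\V)(\tau - \r)$ on $\C^c$, and Fatou (or bounded convergence) plus Markov's inequality finishes the proof with no stopping times and no $\ep, \ep'$ bookkeeping. This is cleaner and yields the bound in one step rather than as a limit. What the paper's route buys in exchange is mostly organizational: the optional stopping argument localizes the estimate at a finite random time, but it still relies on the full limiting structure to prove $T_\ep < \infty$, so your version is not losing anything.

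Two remarks on the part you defer. First, for the almost sure convergence you propose a second-moment Lyapunov functional; the paper gets it more cheaply from the very family of supermartingales you already have, by evaluating $X_t(c)$ at the $2^{\dm}$ corners $c$ of a cube containing $\Delta$ and invoking the martingale convergence theorem (the $\ell^1$ distances to the corners pin down the point, up to norm equivalence). You could borrow that and avoid a separate Lyapunov analysis. Second, your appeal to harmonicity to get constancy on components is sound (apply the maximum principle coordinatewise on each finite connected component of $\G_\infty$; the paper uses an extreme-point argument instead), but the step that $\xi_\infty$ actually satisfies the fixed-point equation $\xi_\infty(x) = \bar\xi_\infty(x)$ whenever $N_\infty(x) \neq \varnothing$ deserves an explicit argument ruling out persistent macroscopic jumps along the limiting compatibility edges; the paper asserts this by calling $\xi_\infty$ an absorbing state, and your write-up should not gloss over it either.
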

 This result is identical to~\cite[Theorem~2.1]{lanchier_li_2020} which applies to the Deffuant model.
 However, because the HK dynamics is significantly different from the Deffuant dynamics, our proof is quite different from the proof in~\cite{lanchier_li_2020}.
 To begin with, we prove that, for all~$c \in \R^n$, the process~$X_t (c)$ that keeps track of the total disagreement between a virtual external agent with opinion~$c$ and the vertices of the graph is a supermartingale~(Lemma~\ref{lem:supermartingale}).
 Looking at~$2^d$ of these processes for values of~$c$ that surround the opinion space and applying the martingale convergence theorem, we deduce that the opinion model converges almost surely to a random configuration~$\xi_{\infty}$~(Lemma~\ref{lem:limit}).
 The evolution rules of the HK-model imply that, in the configuration~$\xi_{\infty}$, any two neighbors on the social network either completely agree or disagree by more than the confidence threshold~$\tau$~(Lemma~\ref{lem:constant}).
 Using this characterization of~$\xi_{\infty}$, we deduce that, for all~$\ep > 0$, the first time~$T_{\ep}$ at which any two neighbors disagree by at most~$\ep$ or at least~$\tau$ is almost surely finite~(Lemma~\ref{lem:stopping-time}).
 We also prove that the event~$\A$ that the opinion at a vertex is at distance less than~$\tau - \r$ from the center~$\c$ of the opinion space at time~$T_{\ep}$ for some~$\ep$ small always drives the process to a global consensus~(Lemma~\ref{lem:consensus}).
 Applying the optional stopping theorem to the supermartingale~$X_t (\c)$ stopped at time~$T_{\ep}$ gives a lower bound for the probability of~$\A$~(Lemma~\ref{lem:subevent}) which, together with Lemma~\ref{lem:consensus}, implies the theorem. \vspace*{5pt} \\
{\bf Numerical example}.
 Before going into the details of the proof, we give a numerical example to show how our result applies to the original HK-model.
 Assume that the opinion space is
 $$ \Delta = B (a, r) = \{c \in \R^n : \norm{a - c} < r \}, $$
 the ball with center~$a$ and radius~$r$, and that the opinions are initially independent and uniformly distributed: $X = \uniform (\Delta)$.
 Then,~$\c = a$ and~$\r = r$.
 In addition,
 $$ P (\norm{X - a} \leq s) = P (X \in B (a, s)) = \frac{\lambda (B (0, s))}{\lambda (B (0, r))} = \frac{s^n \,\lambda (B (0, 1))}{r^n \,\lambda (B (0, 1))} = \bigg(\frac{s}{r} \bigg)^n $$
 for all~$s \leq r$, from which it follows that
 $$ E \,\norm{X - a} = \int_0^{\infty} P (\norm{X - a} > s) \,ds = \int_0^r \bigg(1 - \bigg(\frac{s}{r} \bigg)^n \bigg) \,ds = \frac{nr}{n + 1}. $$
 Therefore, the theorem implies that
 $$ P (\C) \geq 1 - \frac{E \,\norm{X - \c}}{\tau - \r} = 1 - \frac{E \,\norm{X - a}}{\tau - r} = 1 - \frac{nr}{(n + 1)(\tau - r)} \quad \hbox{when} \quad \Delta = B (a, r). $$
 In particular, for the original HK-model~($n = 1$ and~$a = r = 1/2$),
 $$ P (\C) \geq \frac{4 \tau - 3}{4 \tau - 2} > 0 \quad \hbox{for all} \quad \tau > 3/4 \quad \hbox{when} \quad \Delta = [0, 1]. $$


\section{Underlying supermartingales}
 In this section, we prove that the processes that keep track of the total disagreement between a fixed point~$c$ and the model's opinions, i.e.,
 $$ X_t (c) = \sum_{x \in \V} \,\norm{\xi_t (x) - c \,} \quad \hbox{for all} \quad c \in \R^n, $$
 are supermartingales.
 Applying the martingale convergence theorem to these supermartingales will show almost sure convergence of the HK-model to a limiting configuration~$\xi_{\infty}$.
 This result will also be used later in combination with the optional stopping theorem to derive the lower bound for the probability of consensus in the theorem.
\begin{lemma} --
\label{lem:supermartingale}
 The processes~$X_t (c)$ are supermartingales for the filtration~$\F_t$.
\end{lemma}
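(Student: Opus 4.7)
\smallskip

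The plan is to analyze the infinitesimal change in $X_t(c)$ by applying the Markov generator of the process, and then combine the triangle inequality with the symmetry of the compatibility relation to conclude that the generator acting on $X_t(c)$ is non-positive.

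First I would fix $c \in \R^n$ and observe that, because $\Delta$ is bounded and $\V$ is finite, $X_t(c)$ is automatically bounded and adapted, so the only thing to check is that its conditional drift is non-positive. Since at most one vertex updates at a time, when $x$ flips only the summand $\|\xi_t(x) - c\|$ is affected, and it is replaced by
$$
  \bigg\| \frac{1}{|N_t(x)|} \sum_{y \in N_t(x)} \xi_t(y) - c \bigg\|
  = \bigg\| \frac{1}{|N_t(x)|} \sum_{y \in N_t(x)} (\xi_t(y) - c) \bigg\|.
$$
This update occurs at rate $|N_t(x)|$, so writing $L$ for the infinitesimal generator and using the triangle inequality applied to this convex combination, one obtains
$$
  L X_t(c) \leq \sum_{x \in \V} |N_t(x)| \bigg( \frac{1}{|N_t(x)|} \sum_{y \in N_t(x)} \|\xi_t(y) - c\| - \|\xi_t(x) - c\| \bigg).
$$
Vertices $x$ with $N_t(x) = \varnothing$ contribute nothing because their transition rate is zero, so the inequality extends trivially over all $x \in \V$.

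Next I would exploit that the compatibility relation is symmetric: $y \in N_t(x)$ if and only if $x \in N_t(y)$. This allows the double sum $\sum_x \sum_{y \in N_t(x)} \|\xi_t(y) - c\|$ to be reindexed as $\sum_y |N_t(y)| \,\|\xi_t(y) - c\|$, which exactly cancels the subtracted term $\sum_x |N_t(x)| \,\|\xi_t(x) - c\|$. Hence $L X_t(c) \leq 0$, from which the supermartingale property for the filtration $\F_t$ follows in the standard way.

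The argument has no real obstacle; the only subtle point is to recognize that the two algebraic ingredients driving the proof are (i) the convexity of the norm applied to the new opinion written as an average of $\xi_t(y) - c$, and (ii) the symmetry of the compatibility relation, which turns the double sum over ordered compatible pairs into a sum weighted by compatible-neighbor counts. Together these reflect the mean-preserving, locally averaging nature of the HK dynamics.
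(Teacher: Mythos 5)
Your argument is correct and follows essentially the same route as the paper's proof: bound the post-update term via the triangle inequality applied to the average $\frac{1}{|N_t(x)|}\sum_{y \in N_t(x)}(\xi_t(y)-c)$, then use the symmetry $y \in N_t(x) \Leftrightarrow x \in N_t(y)$ to reindex the double sum and cancel the two contributions. No gaps.
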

\begin{proof}
 Let~$\V_t$ be the set of vertices with at least one compatible neighbor, i.e., at least one neighbor at opinion distance at most~$\tau$ of the vertex.
 To simplify the notation, we let
 $$ \bar \xi_t (x) = \frac{1}{|N_t (x)|} \sum_{y \in N_t (x)} \xi_t (y) \quad \hbox{for all} \quad x \in \V_t. $$
 By the absolute homogeneity of the norm and the triangle inequality, for all~$x \in \V_t$,
 $$ \begin{array}{l}
    \displaystyle |N_t (x)| \cdot \norm{\bar \xi_t (x) - c \,} =
    \displaystyle |N_t (x)| \cdot \bignorm{\frac{1}{|N_t (x)|} \sum_{y \in N_t (x)} \xi_t (y) - c \,} \vspace*{4pt} \\ \hspace*{40pt} =
    \displaystyle \bignorm{\sum_{y \in N_t (x)} \xi_t (y) - |N_t (x)| \cdot c \,} =
    \displaystyle \bignorm{\sum_{y \in N_t (x)} (\xi_t (y) - c) \,} \leq \sum_{y \in N_t (x)} \norm{\xi_t (y) - c \,}. \end{array} $$
 In particular, recalling the transition rates of the HK-model, we get
 $$ \begin{array}{rcl}
    \displaystyle \lim_{\ep \to 0} \,E \bigg(\frac{X_{t + \ep} (c) - X_t (c)}{\ep} \ \Big| \ \F_t \bigg) & \n = \n &
    \displaystyle \sum_{x \in \V_t} \,|N_t (x)| \cdot \Big(\norm{\bar \xi_t (x) - c \,} - \norm{\xi_t (x) - c \,} \Big) \vspace*{4pt} \\ & \n = \n &
    \displaystyle \sum_{x \in \V_t} \left(\sum_{y \in N_t (x)} \norm{\xi_t (y) - c \,} - |N_t (x)| \cdot \norm{\xi_t (x) - c \,} \right). \end{array} $$
 To conclude, observe that
 $$ \begin{array}{rcl}
     |\{x \in \V_t : y \in N_t (x) \}| & \n = \n &
     |\{x : (x, y) \in \E \ \hbox{and} \ \norm{\xi_t (x) - \xi_t (y)} \leq \tau \}| \vspace*{4pt} \\ & \n = \n &
     |\{x : (y, x) \in \E \ \hbox{and} \ \norm{\xi_t (y) - \xi_t (x)} \leq \tau \}| = |N_t (y)|. \end{array} $$
 In words, the number of vertices~$y$ is in the neighborhood of and compatible with is equal to the number of vertices that are in the neighborhood of~$y$ and compatible with~$y$.
 It follows that
 $$ \begin{array}{l}
    \displaystyle \lim_{\ep \to 0} \,E \bigg(\frac{X_{t + \ep} (c) - X_t (c)}{\ep} \ \Big| \ \F_t \bigg) \leq
    \displaystyle \sum_{x \in \V_t} \ \sum_{y \in N_t (x)} \norm{\xi_t (y) - c \,} - \sum_{x \in \V_t} \,|N_t (x)| \cdot \norm{\xi_t (x) - c \,} \vspace*{8pt} \\ \hspace*{50pt} =
    \displaystyle \sum_{y \in \V_t} |\bar \xi_t (y)| \cdot \norm{\xi_t (y) - c \,} - \sum_{x \in \V_t} \,|N_t (x)| \cdot \norm{\xi_t (x) - c \,} = 0, \end{array} $$
 which proves that~$X_t (c)$ is a supermartingale.
\end{proof} \\ \\
 Using Lemma~\ref{lem:supermartingale}, we now prove almost sure convergence of the HK-model.
\begin{lemma} --
\label{lem:limit}
 There is a random configuration~$\xi_{\infty} : \V \to \Delta$ such that
 $$ \lim_{t \to \infty} \xi_t (x) = \xi_{\infty} (x) \quad \hbox{almost surely} \quad \hbox{for all} \ x \in \V. $$
\end{lemma}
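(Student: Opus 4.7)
My plan is to combine Lemma \ref{lem:supermartingale} with the martingale convergence theorem, applied to a well-chosen family of reference points $c$.

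First, I would observe that, for each fixed $c \in \R^n$, the process $X_t(c) = \sum_{x \in \V} \norm{\xi_t(x) - c}$ is by Lemma \ref{lem:supermartingale} a supermartingale for the natural filtration $\F_t$, and that it is uniformly bounded: since $\V$ is finite and $\Delta$ is bounded, $X_t(c) \leq |\V| \cdot \sup_{a \in \Delta} \norm{a - c} < \infty$. The martingale convergence theorem then yields almost sure convergence $X_t(c) \to X_\infty(c)$ for every fixed $c$.

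Next I would apply this to $2^n$ specific points $c_1, \ldots, c_{2^n}$, taken to be the vertices of a hypercube centered at $\c$ whose edges are long enough that $\Delta$ lies strictly in its interior. Intersecting the $2^n$ almost-sure convergence events produces a single probability-one event on which $X_t(c_k) \to X_\infty(c_k)$ simultaneously for $k = 1, \ldots, 2^n$. The \emph{surrounding} geometry of these $2^n$ corners is the point of the choice: because $\Delta$ lies strictly inside the hypercube, the collection $(\norm{u - c_k})_{k}$ carries a rich amount of affine information about each $u \in \Delta$.

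To transfer the convergence of the aggregate $X_t(c_k)$ to convergence of each $\xi_t(x)$, I would combine compactness of $\Delta^{|\V|}$ with the geometric structure of the chosen corners. By compactness, the process $\xi_t$ admits almost sure subsequential limits. Any such limit $\xi^\star$ satisfies $\sum_x \norm{\xi^\star(x) - c_k} = X_\infty(c_k)$ for every $k$ by continuity of the norm. The surrounding configuration of the corners, together with the rigidity of the HK jump rule (each jump replaces $\xi_t(x)$ by the specific average $\bar\xi_t(x)$ of its compatible neighbors, rather than by an arbitrary value), should pin down the subsequential limit uniquely, which yields the almost sure limit $\xi_\infty$.

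The main obstacle will be this final step: the supermartingale convergence only controls the sums indexed by $x \in \V$, not the individual opinions $\xi_t(x)$. The challenge is to argue that the $2^n$ simultaneous limits, combined with the averaging dynamics, exclude the possibility of two distinct labeled subsequential limits (the only ambiguity compatible with equal aggregate sums would be a permutation of vertex labels, which the deterministic averaging of the HK dynamics should be too rigid to realize).
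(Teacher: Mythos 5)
Your first two steps (boundedness together with Lemma~\ref{lem:supermartingale}, the martingale convergence theorem, and the choice of the $2^n$ corners of a cube containing $\Delta$) coincide with the paper's. The gap is the transfer step, and it is a real one: the limiting values $X_\infty(c_k)$ do \emph{not} pin down the subsequential limit, not even up to a permutation of labels. Already in one dimension with $\Delta \subset [-r,r]$ and the two corners $c = \pm r$, one has $\sum_x |u(x) - r| = r\card(\V) - \sum_x u(x)$ and $\sum_x |u(x) + r| = r\card(\V) + \sum_x u(x)$, so the aggregates determine only the mean opinion, and any two configurations with the same mean are indistinguishable by them; the same degeneracy persists in higher dimensions. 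Ruling out two distinct accumulation points by appealing to the ``rigidity of the averaging rule'' would require a substantive additional argument that the proposal does not supply, so the route ``identify $\xi^\star$ from the limits of the aggregates'' cannot be completed as described.

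The paper extracts pointwise convergence from a different feature of the supermartingales: their \emph{increments} rather than their limits. Almost surely only one vertex updates at a time, and relative to a corner $c$ of the cube $\Lambda = [-r,r]^n \supset \Delta$ the $\ell^1$ distance $\norm{\cdot - c}_1$ is affine on $\Lambda$, so the increment of $\norm{\xi_t(x) - c}_1$ at a jump is a signed sum of the coordinate increments of $\xi_t(x)$; choosing the corner that aligns all the signs recovers the full $\ell^1$ size of the jump, and norm equivalence supplies a constant $c_1 > 0$ such that a configuration jump with $\norm{\xi_t(x) - \xi_{t-}(x)}_1 > \ep$ forces $|X_t(c) - X_{t-}(c)| > c_1 \ep$ for at least one corner $c$. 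Since the finitely many processes $X_t(c)$ all converge almost surely, none of them can have jumps of size exceeding $c_1\ep$ at arbitrarily large times, hence neither can the configuration, for every $\ep > 0$, and the paper concludes convergence of $\xi_t$ from the absence of such late macroscopic jumps. If you want to keep your compactness framing you still need this control on the increments (or a substitute for it); I would therefore redirect the argument from the limits of $X_t(c)$ to their jumps.
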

\begin{proof}
 Let~$r > 0$ be sufficiently large that~$\Delta \subset \Lambda = [- r, r]^d$ (see Figure~\ref{fig:opinion}).
 Assuming that the configuration~$\xi_t$ does not converge, there exists~$\ep > 0$ such that
\begin{equation}
\label{eq:limit-1}
  \max_{x \in \V} \norm{\xi_t (x) - \xi_{t-} (x)}_1 = \max_{x \in \V} \ \sum_{i = 1}^n \,|\langle \xi_t (x) - \xi_{t-} (x), e_i \rangle| > \ep \quad \hbox{at arbitrary large times}
\end{equation}
 where~$e_i = (0, \ldots, 0, 1, 0, \ldots, 0) \in \R^n$ denotes the~$i$th unit vector.
 By the choice of~$r$ and by the equivalence of the norms in finite dimension, there exists~$c_1 > 0$ such that, for at least one of the corners of the~$n$-dimensional cube~$\Lambda$, say corner~$c$,
 $$ \begin{array}{rcl}
     |X_t (c) - X_{t-} (c)| & \n \geq \n &
      c_1 \,\max_{x \in \V} |\norm{\xi_t (x) - c \,}_1 - \norm{\xi_{t-} (x) - c \,}_1| \vspace*{4pt} \\ & \n = \n &
      c_1 \,\max_{x \in \V} \norm{\xi_t (x) - \xi_{t-} (x)}_1 > \ep c_1 > 0. \end{array} $$
 In particular, for at least one corner~$c$,
\begin{equation}
\label{eq:limit-2}
  |X_t (c) - X_{t-} (c)| > \ep c_1 \quad \hbox{at arbitrary large times},
\end{equation}
 showing that~$X_t (c)$ does not converge.
 Now, using again the equivalence of the norms, there exists~$c_2 < \infty$ such that the process is bounded by
 $$ |X_t (c)| \leq \max_{a \in \Delta} \norm{a - c \,} \card (\V) \leq c_2 \,\max_{a, b \in \Lambda} \norm{a - b}_1 \card (\V) = 2nrc_2 \card (\V) < \infty. $$
 Because the process~$X_t (c)$ is also a supermartingale according to Lemma~\ref{lem:supermartingale}, it follows from the martingale convergence theorem that it converges almost surely.
 In particular, the events in~\eqref{eq:limit-1} and~\eqref{eq:limit-2} and the divergence of~$\xi_t$ occur with probability zero, which proves the lemma.
\end{proof} \\ \\
 To study the limiting configuration~$\xi_{\infty}$, we now let~$\G_{\infty} = (\V, \E_{\infty})$ be the random subgraph of the social network~$\G$ induced by the edges that are compatible in the limit:
\begin{equation}
\label{eq:subedge}
  (x, y) \in \E_{\infty} \quad \hbox{if and only if} \quad (x, y) \in \E \quad \hbox{and} \quad \norm{\xi_{\infty} (x) - \xi_{\infty} (y)} \leq \tau.
\end{equation}
 The next lemma shows that, in the limiting configuration, individuals on the same connected component of the graph~$\G_{\infty}$ share the same opinion.
\begin{figure}[t!]
\centering
\scalebox{0.50}{\input{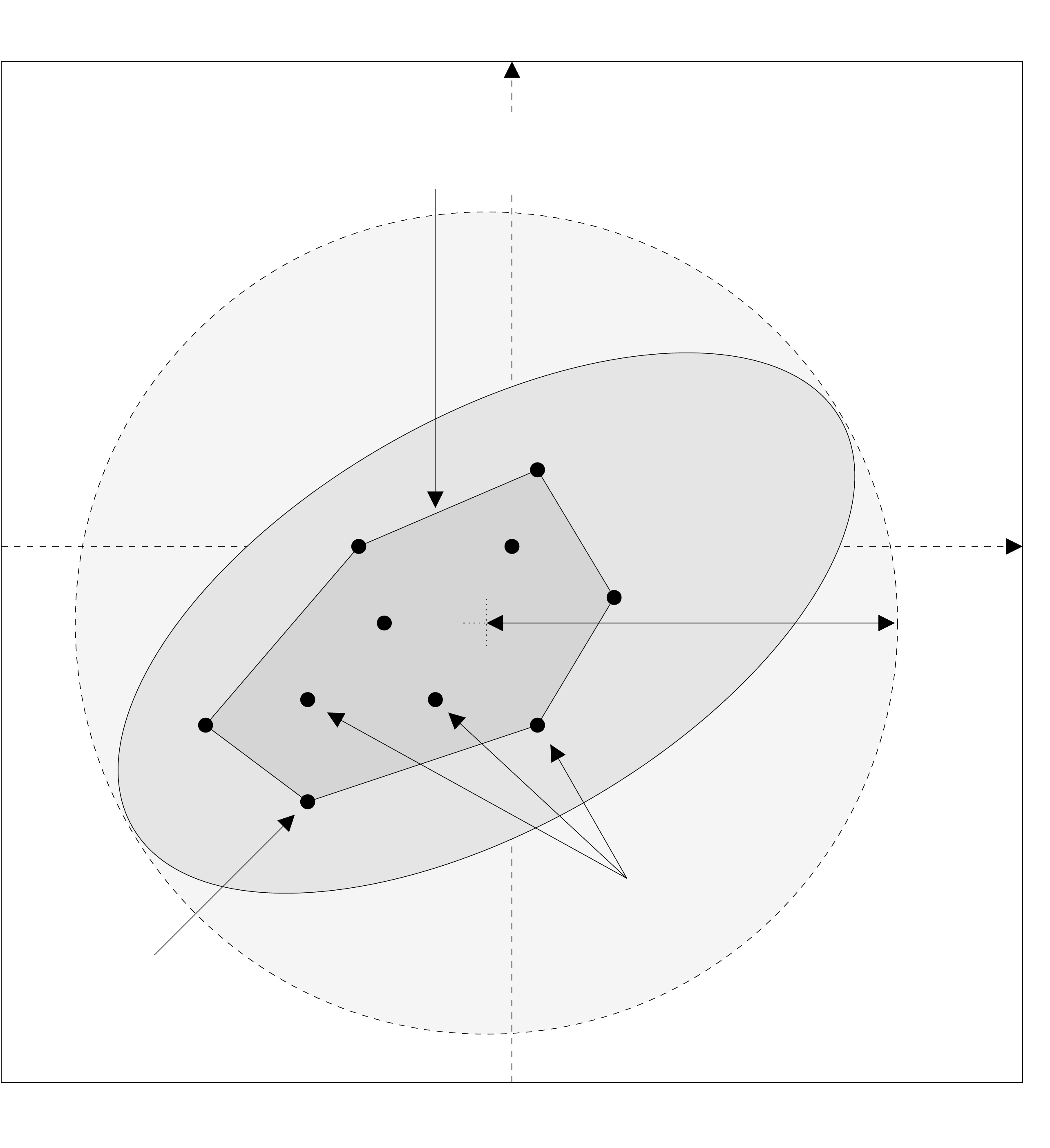_t}}
\caption{\upshape{Picture related to the proof of Lemmas \ref{lem:limit} and~\ref{lem:constant}}.}
\label{fig:opinion}
\end{figure}
\begin{lemma} --
\label{lem:constant}
 Configuration~$\xi_{\infty}$ is constant on each of the connected components of~$\G_{\infty}$.
\end{lemma}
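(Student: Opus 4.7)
The plan is a proof by contradiction using the extreme-point structure of the limit opinions within a connected component. Suppose $\xi_\infty$ is not constant on some connected component $\V'$ of $\G_\infty$ and set $K = \conv \{\xi_\infty (z) : z \in \V' \}$, which is a polytope containing more than one point. Pick an extreme point $p$ of $K$, which is automatically exposed because $K$ is a polytope; thus there exists a linear functional $\ell : \R^n \to \R$ with $\ell (p) = 0$ and $\ell (q) > 0$ for every $q \in K \setminus \{p\}$. Because $\V'$ is connected in $\G_\infty$ while $W = \{z \in \V' : \xi_\infty (z) = p\}$ is a proper nonempty subset, one can select an edge $(x, y) \in \E_\infty$ with $x \in W$ and $y \in \V' \setminus W$, so that $\xi_\infty (x) = p$ and $\xi_\infty (y) \neq p$.

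The heart of the argument is to derive a contradiction from the convergence $\xi_t (x) \to p$ guaranteed by Lemma \ref{lem:limit}. Using that lemma and the finiteness of $\V$, for every $\eta > 0$ there is a random time $T$ such that $\norm{\xi_t (v) - \xi_\infty (v)} < \eta$ for every $v \in \V$ and all $t \geq T$. For $\eta$ small enough, every pair $(v, w) \in \E \setminus \E_\infty$ becomes incompatible at all times $t \geq T$, so $N_t (x) \subseteq \V'$ for $t \geq T$, and assuming $x$ updates infinitely often, one can extract, by finiteness of the possible compatible neighborhoods, a subsequence of update times $t_k \to \infty$ along which $N_{t_k^-} (x) = N_0$ is constant. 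The update rule together with the convergence $\xi_{t_k} (x) \to p$ then yields
$$ p = \frac{1}{|N_0|} \sum_{z \in N_0} \xi_\infty (z), $$
a convex combination of points in $K$; since $p$ is extreme in $K$, this forces $\xi_\infty (z) = p$ for every $z \in N_0$.

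The contradiction would follow from $y \in N_0$ together with $\xi_\infty (y) \neq p$. When $\norm{\xi_\infty (x) - \xi_\infty (y)} < \tau$ strictly, continuity of the norm guarantees $y \in N_t (x)$ for all sufficiently large $t$, hence $y \in N_0$ and we are done. The main obstacle I anticipate is the borderline case $\norm{\xi_\infty (x) - \xi_\infty (y)} = \tau$, in which the compatibility of $y$ with $x$ may oscillate and $y$ might fail to enter the stabilized neighborhood $N_0$. I would handle this by reselecting the boundary edge inside $\V'$ so that it falls in the strict regime: by a small perturbation of the separating functional $\ell$ one can either keep $p$ or move it to a nearby extreme point of $K$ for which a strict boundary edge is available, and in the worst case one reroutes along a $\G_\infty$-path from $W$ to $\V' \setminus W$ to isolate such a strict consecutive pair. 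Once this step is resolved, the convex-combination identity above contradicts $\xi_\infty (y) \neq p$, and hence $\xi_\infty$ must be constant on every connected component of $\G_\infty$.
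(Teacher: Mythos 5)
Your proposal follows the same skeleton as the paper's proof: form $K = \conv\{\xi_{\infty}(z) : z \in \V'\}$, take an extreme point $p$, and use the averaging update rule to show that every vertex whose limit opinion is $p$ forces its compatible neighbors to share that opinion, contradicting non-constancy. In fact your treatment of the limiting dynamics is more explicit than the paper's: where the paper simply declares $\xi_{\infty}$ absorbing and writes $N_{\infty}(x) = \lim_{t \to \infty} N_t(x)$, you extract a subsequence of update times along which the compatible neighborhood is a fixed set $N_0 \subset \V'$ and pass to the limit in the update identity. That part is sound.

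The gap is exactly the borderline case you flag, and your proposed patch does not close it. If $\norm{\xi_{\infty}(x) - \xi_{\infty}(y)} = \tau$, the edge $(x,y)$ belongs to $\E_{\infty}$ by definition, yet $y$ need never enter $N_t(x)$ for large $t$, so $y \notin N_0$ and no contradiction is produced. Perturbing the separating functional $\ell$ or rerouting along a $\G_{\infty}$-path cannot manufacture a ``strict'' crossing edge, because none need exist: consider a component of $\G_{\infty}$ consisting of a single edge whose endpoint opinions converge to limits exactly $\tau$ apart while the running distance stays above $\tau$; every crossing edge is borderline, and there is nothing to reselect. The same case also undercuts your parenthetical ``assuming $x$ updates infinitely often'': if all $\G_{\infty}$-edges at $x$ are borderline in this way, $N_t(x)$ may be empty for all large $t$, $x$ freezes, and the fixed-point identity $p = |N_0|^{-1}\sum_{z \in N_0}\xi_{\infty}(z)$ is never generated. (The paper's own proof buries this same subtlety in the assertion that $N_{\infty}(x) \neq \varnothing$ because the component's edges are ``compatible in the limit''; but having isolated the case explicitly, you need an actual argument for it --- e.g., ruling out almost surely that a limiting pairwise distance equals $\tau$ with eventual incompatibility, or reformulating the lemma in terms of $N_{\infty}$ rather than $\E_{\infty}$ --- and the sketch you give is not one.)
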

\begin{proof}
 Let~$\G' = (\V', \E')$ be a connected component of the graph~$\G_{\infty}$, and observe that, because~$\xi_{\infty}$ is an absorbing state for the HK-model, it follows from the evolution rules that
 $$ N_{\infty} (x) = \lim_{t \to \infty} N_t (x) = \varnothing \quad \hbox{or} \quad \xi_{\infty} (x) = \frac{1}{\card (N_{\infty} (x))} \sum_{y \in N_{\infty} (x)} \xi_{\infty} (y) $$
 for all~$x \in \V$.
 When~$\V'$ reduces to one vertex,~$\xi_{\infty}$ is trivially constant on~$\V'$.
 Otherwise, because the subgraph~$\G'$ is connected and all its edges are compatible in the limit, we have
\begin{equation}
\label{eq:constant-1}
  N_{\infty} (x) \neq \varnothing \quad \hbox{so} \quad \xi_{\infty} (x) = \frac{1}{\card (N_{\infty} (x))} \sum_{y \in N_{\infty} (x)} \xi_{\infty} (y) \quad \hbox{for all} \quad x \in \V'.
\end{equation}
 In this case, let~$K$ be the convex envelope of the limiting opinions in~$\V'$,
 $$ K = \conv \{\xi_{\infty} (z) : z \in \V' \} \subset \Delta, $$
 and let~$\xi_{\infty} (x) \in K$, $x \in \V'$, be an extreme point of~$K$, i.e., a point which is not an inner point of segments of the convex envelope~(see Figure~\ref{fig:opinion}).
 Now, assume by contradiction that
\begin{equation}
\label{eq:constant-2}
  \xi_{\infty} (y) \neq \xi_{\infty} (x) \quad \hbox{for some} \quad y \in N_{\infty} (x).
\end{equation}
 Then, it follows from~\eqref{eq:constant-1} that
\begin{equation}
\label{eq:constant-3}
  \xi_{\infty} (x) \in \Int (K') \quad \hbox{where} \quad K' = \conv \{\xi_{\infty} (y) : y \in N_{\infty} (x) \}.
\end{equation}
 Here, $\Int (K')$ refers to the interior of the set~$K'$. Because
 $$ N_{\infty} (x) \subset \V' \quad \hbox{and so} \quad K' \subset K, $$
 we deduce from~\eqref{eq:constant-3} that~$\xi_{\infty} (x) \in \Int (K)$, contradicting the fact that~$\xi_{\infty} (x)$ is an extreme point of the convex set~$K$.
 This shows that the statement in~\eqref{eq:constant-2} is false therefore all the neighbors of~$x$ on the connected component must share the same opinion as~$x$ in the limit.
 This also implies that the neighbors' opinions are (identical) extreme points of~$K$ so, by the same reasoning, the neighbors of the neighbors of~$x$ must share the same opinion as~$x$, and using a simple induction, we deduce that the convex envelope reduces to one point: $K = \{\xi_{\infty} (x) \}$, which proves the lemma. 
\end{proof} \\ \\
 Note that the proof of the lemma can be simplified when dealing with the original HK-model, where~$\Lambda = (0, 1)$ and~$\norm{\cdot}$ refers to the Euclidean norm, using that, according to~\eqref{eq:constant-1}, the restriction of the limiting configuration to each of the connected components of~$\G_{\infty}$ is harmonic.
 Indeed, in this case, let~$x \in \V'$ be a vertex where~$\xi_{\infty}$ reaches its maximum:
 $$ \xi_{\infty} (x) = \max \{\xi_{\infty} (z) : z \in \V' \}. $$
 Then, because~$\xi_{\infty} (x) \geq \xi_{\infty} (y)$ for all~$y \in N_{\infty} (x) \subset \V'$ and
 $$ \frac{1}{\card (N_{\infty} (x))} \sum_{y \in N_{\infty} (x)} (\xi_{\infty} (x)  - \xi_{\infty} (y)) = \xi_{\infty} (x) - \frac{1}{\card (N_{\infty} (x))} \sum_{y \in N_{\infty} (x)} \xi_{\infty} (y) = 0, $$
 we must have~$\xi_{\infty} (x) = \xi_{\infty} (y)$ for all~$y \in N_{\infty} (x)$.


\section{Optional stopping and consensus}
 As previously mentioned, the next step to prove the theorem is to apply the optional stopping theorem to the supermartingale~$X_t (\c)$.
 In order to apply the optional stopping theorem, we also define the collection of stopping times
 $$ T_{\ep} = \inf \,\{t : \norm{\xi_t (x) - \xi_t (y)} \not \in [\ep, \tau] \ \hbox{for all} \ (x, y) \in \E \} \quad \hbox{for all} \quad \ep \in (0, \tau). $$
 The next lemma shows that these stopping times are almost surely finite, which is one of the required assumptions to apply the optional stopping theorem.
\begin{lemma} --
\label{lem:stopping-time}
 For all~$\ep \in (0, \tau)$, time~$T_{\ep}$ is almost surely finite.
\end{lemma}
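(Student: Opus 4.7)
The plan is to combine Lemmas~\ref{lem:limit} and~\ref{lem:constant} to show that, in the limiting configuration, every edge of the social network has endpoint opinions that either agree exactly or disagree by strictly more than~$\tau$, and then to use almost sure convergence together with the finiteness of the edge set to conclude.

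More precisely, I would first argue that for every edge~$(x, y) \in \E$, one has almost surely
$$ \norm{\xi_{\infty} (x) - \xi_{\infty} (y)} = 0 \quad \hbox{or} \quad \norm{\xi_{\infty} (x) - \xi_{\infty} (y)} > \tau. $$
Indeed, by the definition~\eqref{eq:subedge} of~$\E_{\infty}$, either~$(x, y) \not \in \E_{\infty}$, in which case the limiting opinion distance along~$(x, y)$ strictly exceeds~$\tau$, or~$(x, y) \in \E_{\infty}$, in which case~$x$ and~$y$ belong to the same connected component of~$\G_{\infty}$, and Lemma~\ref{lem:constant} forces~$\xi_{\infty} (x) = \xi_{\infty} (y)$.

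Next, fix~$\ep \in (0, \tau)$ and work on the probability-one event on which~$\xi_t \to \xi_{\infty}$ pointwise~(Lemma~\ref{lem:limit}). For each edge~$(x, y) \in \E$, continuity of the norm gives
$$ \lim_{t \to \infty} \norm{\xi_t (x) - \xi_t (y)} = \norm{\xi_{\infty} (x) - \xi_{\infty} (y)} \in \{0\} \cup (\tau, \infty), $$
so there exists a~(random) finite time~$T (x, y)$ beyond which~$\norm{\xi_t (x) - \xi_t (y)}$ lies in~$[0, \ep) \cup (\tau, \infty)$, i.e., outside the closed interval~$[\ep, \tau]$. Taking the maximum of~$T (x, y)$ over the finitely many edges of~$\G$ produces an almost surely finite time~$T$ such that, for every~$t \geq T$ and every edge~$(x, y) \in \E$, $\norm{\xi_t (x) - \xi_t (y)} \not \in [\ep, \tau]$. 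By definition of~$T_{\ep}$, this yields~$T_{\ep} \leq T < \infty$ almost surely.

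The only mildly delicate point is to rule out an infinite oscillation of some~$\norm{\xi_t (x) - \xi_t (y)}$ through the band~$[\ep, \tau]$, but this is immediately taken care of by Lemma~\ref{lem:limit}, which gives actual pointwise convergence of~$\xi_t$ rather than merely a convergence in some weaker sense. Since the argument relies only on a finite edge set and on the dichotomy established from Lemma~\ref{lem:constant}, no quantitative control on the speed of convergence is needed.
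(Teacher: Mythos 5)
Your proof is correct and uses the same ingredients as the paper: Lemma~\ref{lem:limit} for almost sure convergence, the definition~\eqref{eq:subedge} of~$\E_{\infty}$, and Lemma~\ref{lem:constant} to obtain the dichotomy that limiting opinion distances along edges lie in~$\{0\} \cup (\tau, \infty)$. The paper phrases the argument as a proof by contradiction (an edge staying in~$[\ep, \tau]$ at arbitrarily large times would force a nonconstant limit on a component of~$\G_{\infty}$), whereas you argue directly by taking a maximum over the finite edge set, but this is only a cosmetic difference.
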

\begin{proof}
 On the event that~$T_{\ep} = \infty$, there exists~$(x, y) \in \E$ such that
\begin{equation}
\label{eq:stopping-time-1}
  \norm{\xi_t (x) - \xi_t (y)} \in [\ep, \tau] \quad \hbox{at arbitrary large times}.
\end{equation}
 This, together with Lemma~\ref{lem:limit}, implies that
\begin{equation}
\label{eq:stopping-time-2}
  \norm{\xi_{\infty} (x) - \xi_{\infty} (y)} = \lim_{t \to \infty} \norm{\xi_t (x) - \xi_t (y)} \in [\ep, \tau].
\end{equation}
 Observing also that, by the definition of~$\E_{\infty}$ in~\eqref{eq:subedge},
 $$ \begin{array}{rcl}
     (x, y) \in \E \ \hbox{and} \ \norm{\xi_{\infty} (x) - \xi_{\infty} (y)} \leq \tau & \Longrightarrow & (x, y) \in \E_{\infty} \vspace*{4pt} \\
                                  \norm{\xi_{\infty} (x) - \xi_{\infty} (y)} \geq \ep  & \Longrightarrow & \xi_{\infty} (x) \neq \xi_{\infty} (y), \end{array} $$
 we deduce that the limiting configuration~$\xi_{\infty}$ is not constant on the connected components of the random graph~$\G_{\infty}$ which, according to Lemma~\ref{lem:constant}, is an event with probability zero.
 In conclusion, the events in~\eqref{eq:stopping-time-1} and~\eqref{eq:stopping-time-2} and the event that~$T_{\ep} = \infty$ all have probability zero.
\end{proof} \\ \\
 Next, we prove that if, at time~$T_{\ep}$ for~$\ep$ small, the opinion of at least one vertex is close enough to the center~$\c$ of the opinion space, an event that we call~$\A$, then all the opinions across the network should be close to that opinion.
 This, together with a convexity argument and~Lemma~\ref{lem:constant}, implies consensus.
 In particular, the probability of consensus is larger than the probability of~$\A$, which will be used later to deduce the theorem.
\begin{lemma} --
\label{lem:consensus}
 For all~$\ep' \in (0, \tau / 2)$ and~$\tau > \r + \ep'$, there is~$\ep > 0$ such that
 $$ \A = \A (\ep, \ep') = \Big\{\norm{\xi_{T_{\ep}} (x) - \c} < \tau - \r - \ep' \ \hbox{for some} \ x \in \V \Big\} \subset \C. $$
\end{lemma}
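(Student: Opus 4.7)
The plan is to choose $\ep$ small enough that, on the event $\A$, the proximity of some vertex's opinion to $\c$ at time $T_\ep$ forces all opinions across $\G$ to cluster near each other, after which the HK dynamics can only contract opinions and a consensus becomes inevitable. Letting $D$ denote the diameter of the finite connected graph $\G$, I would pick any $\ep$ with $0 < \ep < \ep' / D$.

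On the event $\A$, fix a vertex $x \in \V$ with $\norm{\xi_{T_\ep}(x) - \c} < \tau - \r - \ep'$. I would argue by induction on the graph distance $k$ from $x$ to $y$ that $\norm{\xi_{T_\ep}(y) - \c} < \tau - \r - \ep' + k \ep$ for all $y \in \V$, and simultaneously that every edge $(y, z) \in \E$ incident to such a $y$ satisfies $\norm{\xi_{T_\ep}(y) - \xi_{T_\ep}(z)} < \ep$. For the inductive step, the triangle inequality combined with $\xi_{T_\ep}(z) \in \Delta \subset B(\c, \r)$ gives
\[ \norm{\xi_{T_\ep}(y) - \xi_{T_\ep}(z)} \leq \norm{\xi_{T_\ep}(y) - \c} + \r < \tau - \ep' + k \ep, \]
which is strictly less than $\tau$ because $k \leq D$ and $\ep < \ep'/D$. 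But by the definition of $T_\ep$ this distance lies outside $[\ep, \tau]$; being strictly below $\tau$, it must be strictly below $\ep$, and one more triangle inequality advances the bound on the distance to $\c$ from $k$ to $k + 1$.

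Chaining the edge bound along a shortest path between any two vertices then shows that the opinion diameter at time $T_\ep$ is at most $D \ep < \ep' < \tau$. Since the HK update rule with $\alpha = 0$ replaces $\xi_t(x)$ by a convex combination of other opinions in $\V$, the convex hull $\conv \{\xi_t(y) : y \in \V\}$ is non-increasing in $t$; hence the opinion diameter, which equals the diameter of the convex hull, stays strictly below $\tau$ for all $t \geq T_\ep$. Consequently every edge of $\G$ remains compatible for all times $t \geq T_\ep$ and in particular in the limit, so $\G_\infty = \G$, which is connected. Lemma \ref{lem:constant} then forces $\xi_\infty$ to be constant on $\V$, that is, $\A \subset \C$.

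The main obstacle is getting the quantitative propagation right: each step of the BFS-style induction spends $\ep$ worth of the slack $\tau - \r$, and the cumulative loss along a geodesic path of length up to $D$ must not exceed the initial reserve $\ep'$. Because $\G$ is finite, such an $\ep$ always exists; this constraint also makes clear that $\ep$ is allowed to depend on the network $\G$ through its diameter $D$, which is consistent with the lemma's statement.
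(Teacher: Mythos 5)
Your proof is correct and follows essentially the same strategy as the paper: choose $\ep$ small relative to the size of the graph, propagate closeness from the distinguished vertex $x$ by induction on graph distance using the dichotomy at time $T_{\ep}$ (edge disagreements are either below $\ep$ or above $\tau$), then use convexity of the updates to keep all opinions within a single compatible cluster so that $\G_{\infty}$ is connected and Lemma~\ref{lem:constant} applies. The only differences are cosmetic: you track distance to $\c$ and take $\ep < \ep'/D$ with $D$ the graph diameter, while the paper tracks distance to $\xi_{T_{\ep}}(x)$, takes $\ep = \ep'/\card(\V)$, and runs the inductive step by contradiction.
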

\begin{proof}
 First, we observe that, on the event~$\A$, for all~$y \in \V$,
\begin{equation}
\label{eq:consensus-1}
  \norm{\xi_{T_{\ep}} (x) - \xi_{T_{\ep}} (y)} \leq \norm{\xi_{T_{\ep}} (x) - \c} + \norm{\xi_{T_{\ep}} (y) - \c} < \tau - \r - \ep' + \r = \tau - \ep'.
\end{equation}
 Now, we prove by induction that, on the event~$\A$,
\begin{equation}
\label{eq:consensus-2}
  \norm{\xi_{T_{\ep}} (x) - \xi_{T_{\ep}} (y)} \leq d (x, y) \,\ep \quad \hbox{for all} \quad y \in \V \quad \hbox{when} \quad \ep = \ep' / \card (\V).
\end{equation}
 Here,~$d (x, y)$ refers to the graph distance between vertex~$x$ and vertex~$y$. \vspace*{5pt} \\
{\bf Base case}: Assume that~$d (x, y) = 0$. Then,
 $$ y = x \quad \hbox{and} \quad \norm{\xi_{T_{\ep}} (x) - \xi_{T_{\ep}} (y)} = 0 = 0 \times \ep. $$
{\bf Induction step}: Let~$y \neq x$ and~$d = d (x, y)$ and assume that~\eqref{eq:consensus-2} holds for all vertices at graph distance~$d - 1$ from vertex~$x$.
 Letting~$z$ be a neighbor of~$y$ at distance~$d - 1$ from~$x$,
\begin{equation}
\label{eq:consensus-3}
  \norm{\xi_{T_{\ep}} (x) - \xi_{T_{\ep}} (z)} \leq (d - 1) \,\ep.
\end{equation}
 Because the longest self-avoiding path on the social network contains less than~$\card (\V)$ edges, we must have~$d < \card (\V)$.
 Assuming by contradiction that~$\norm{\xi_{T_{\ep}} (y) - \xi_{T_{\ep}} (z)} > \ep$, using the triangle inequality, recalling the definition of~$T_{\ep}$, and using~\eqref{eq:consensus-1}, we get
 $$ \begin{array}{rcl}
     \norm{\xi_{T_{\ep}} (x) - \xi_{T_{\ep}} (z)} & \n \geq \n & \norm{\xi_{T_{\ep}} (z) - \xi_{T_{\ep}} (y)} - \norm{\xi_{T_{\ep}} (y) - \xi_{T_{\ep}} (x)} \vspace*{4pt} \\
                                                  & \n \geq \n & \tau - (\tau - \ep') = \ep' = \ep \card (\V) > (d - 1) \,\ep,
    \end{array} $$
  which contradicts~\eqref{eq:consensus-3}, therefore~$\norm{\xi_{T_{\ep}} (y) - \xi_{T_{\ep}} (z)} \leq \ep$ and
  $$ \norm{\xi_{T_{\ep}} (x) - \xi_{T_{\ep}} (y)} \leq \norm{\xi_{T_{\ep}} (x) - \xi_{T_{\ep}} (z)} + \norm{\xi_{T_{\ep}} (y) - \xi_{T_{\ep}} (z)} \leq (d - 1) \,\ep + \ep = d \ep. $$
  This completes the proof of~\eqref{eq:consensus-2}. \vspace*{5pt} \\
  Recalling that~$d (x, y) < \card (\V)$ for all~$y \in \V$, we deduce that, on the event~$\A$,
  $$ \norm{\xi_{T_{\ep}} (x) - \xi_{T_{\ep}} (y)} < \ep \card (\V) = \ep' \quad \hbox{and so} \quad \xi_{T_{\ep}} (y) \in B (\xi_{T_{\ep}} (x), \ep') \quad \hbox{for all} \quad y \in \V. $$
  Since at each update of the process the new opinion is contained in the convex envelope of the other opinions, we deduce that, on the event~$\A$,
  $$ \xi_{\infty} (y) \in \conv \{\xi_{T_{\ep}} (z) : z \in \V \} \subset B (\xi_{T_{\ep}} (x), \ep') \quad \hbox{and so} \quad \norm{\xi_{\infty} (y) - \xi_{\infty} (z)} < 2 \ep' < \tau $$
  for all~$y, z \in \V$.
  In conclusion, $\G_{\infty}$ is connected and~$\C$ occurs by Lemma~\ref{lem:constant}.
\end{proof} \\ \\
 The last step to prove the theorem is to apply the optional stopping theorem to the supermartingale~$X_t (\c)$ stopped at time~$T_{\ep}$ to get a lower bound for the probability of the sub-event~$\A$.
\begin{lemma} --
\label{lem:subevent}
 The event~$\A = \A (\ep, \ep')$ defined in Lemma~\ref{lem:consensus} has probability
 $$ P (\A) \geq 1 - E \,\norm{X - \c} / (\tau - \r - \ep'). $$
\end{lemma}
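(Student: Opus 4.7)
The plan is to apply the optional stopping theorem to the supermartingale~$X_t(\c)$ at the stopping time~$T_\ep$, where~$\ep = \ep'/\card(\V)$ as in Lemma~\ref{lem:consensus}, and then to bound~$X_{T_\ep}(\c)$ from below on the complement~$\A^c$ using a vertex-by-vertex comparison with the threshold~$\tau - \r - \ep'$.

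First, I would verify the hypotheses of the optional stopping theorem. Lemma~\ref{lem:supermartingale} shows that~$X_t(\c)$ is an~$\F_t$-supermartingale, and the proof of Lemma~\ref{lem:limit} already established a deterministic uniform bound of the form~$|X_t(\c)| \leq 2nrc_2 \,\card(\V) < \infty$, so the supermartingale is bounded. Lemma~\ref{lem:stopping-time} asserts that~$T_\ep < \infty$ almost surely. The optional stopping theorem thus applies and yields
$$ E \,X_{T_\ep}(\c) \leq E \,X_0(\c) = \sum_{x \in \V} E \,\norm{\xi_0(x) - \c} = \card(\V) \cdot E \,\norm{X - \c}, $$
where the last equality uses the fact that the initial opinions~$\xi_0(x)$ are i.i.d.\ copies of~$X$.

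Second, I would derive a lower bound for~$X_{T_\ep}(\c)$ on~$\A^c$. By the very definition of~$\A$, on the complementary event~$\A^c$ every vertex~$x \in \V$ satisfies~$\norm{\xi_{T_\ep}(x) - \c} \geq \tau - \r - \ep'$. Summing over~$x \in \V$ gives
$$ X_{T_\ep}(\c) \geq (\tau - \r - \ep') \,\card(\V) \,\ind_{\A^c}, $$
so taking expectations yields~$E \,X_{T_\ep}(\c) \geq (\tau - \r - \ep') \,\card(\V) \,P(\A^c)$. Combining this with the bound from the optional stopping theorem and dividing by~$(\tau - \r - \ep') \,\card(\V) > 0$ gives~$P(\A^c) \leq E \,\norm{X - \c}/(\tau - \r - \ep')$, which is the desired conclusion after passing to the complement.

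The only subtle point is checking that the optional stopping theorem actually applies, which is why I would emphasize reusing the explicit uniform bound on~$|X_t(\c)|$ from the proof of Lemma~\ref{lem:limit} rather than appealing to a weaker integrability hypothesis; beyond that, the argument reduces to a two-line comparison and the identification of~$E \,X_0(\c)$ via the i.i.d.\ assumption on the initial configuration.
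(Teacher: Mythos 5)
Your proposal is correct and follows essentially the same route as the paper: bound $\norm{\xi_{T_{\ep}}(x) - \c}$ from below on $\A^c$, apply the optional stopping theorem to the bounded supermartingale $X_t(\c)$ at the almost surely finite time $T_{\ep}$, and identify $E\,X_0(\c)$ via the i.i.d.\ assumption. The only cosmetic difference is that you phrase the lower bound with an indicator $\ind_{\A^c}$ where the paper conditions on the partition $\{\A, \A^c\}$; these are equivalent (both implicitly use $X_{T_{\ep}}(\c) \geq 0$).
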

\begin{proof}
 Observing that, on the complement of~$\A$,
 $$ \norm{\xi_{T_{\ep}} (x) - \c} \geq \tau - \r - \ep' \quad \hbox{for all} \quad x \in \V, $$
 we deduce that the conditional expectation of~$X_{T_{\ep}} (\c)$ is bounded from below by
 $$ E (X_{T_{\ep}} (\c) \,| \,\A^c) = E \bigg(\sum_{x \in \V} \,\norm{\xi_{T_{\ep}} (x) - \c \,} \,\Big| \,\A^c \bigg) \geq (\tau - \r - \ep') \cdot \card (\V). $$
 In particular, conditioning on the partition~$\{\A, \A^c \}$, we get
\begin{equation}
\label{eq:subevent-1}
  \begin{array}{rcl}
    E (X_{T_{\ep}} (\c)) & \n = \n & E (X_{T_{\ep}} (\c) \,| \,\A) P (\A) + E (X_{T_{\ep}} (\c) \,| \,\A^c) P (\A^c) \vspace*{4pt} \\
                         & \n \geq \n & E (X_{T_{\ep}} (\c) \,| \,\A^c) P (\A^c) \geq (\tau - \r - \ep') \card (\V) P (\A^c). \end{array}
\end{equation}
 Because the process~$X_t (\c)$ is a (bounded) supermartingale according to~Lemma~\ref{lem:supermartingale} and because~$T_{\ep}$ is an almost surely finite stopping time according to~Lemma~\ref{lem:stopping-time}, we may apply the optional stopping theorem.
 This, together with the previous inequality~\eqref{eq:subevent-1}, gives
\begin{equation}
\label{eq:subevent-2}
  P (\A) = 1 - P (\A^c) \geq 1 - \frac{E (X_{T_{\ep}} (\c))}{(\tau - \r - \ep') \card (\V)} \geq 1 - \frac{E (X_0 (\c))}{(\tau - \r - \ep') \card (\V)}.
\end{equation}
 Recalling that the opinions across the social network~$\G$ are initially independent and identically distributed with values in the opinion space~$\Delta$, we also have
\begin{equation}
\label{eq:subevent-3}
  E (X_0 (\c)) = E \bigg(\sum_{x \in \V} \,\norm{\xi_0 (x) - \c \,} \bigg) = \sum_{x \in \V} \,E \,\norm{\xi_0 (x) - \c \,} = E \,\norm{X - \c} \cdot \card (\V).
\end{equation}
 The result follows from~\eqref{eq:subevent-2} and~\eqref{eq:subevent-3}.
\end{proof} \\ \\
 To deduce the theorem, let~$\tau > \r$.
 Then, for all~$\ep' \in (0, \tau - \r)$, we have~$\tau > \r + \ep'$.
 Therefore, according to Lemmas~\ref{lem:consensus} and~\ref{lem:subevent}, there exists~$\ep > 0$ such that
 $$ P (\C) \geq P (\A (\ep, \ep')) \geq 1 - E \,\norm{X - \c} / (\tau - \r - \ep'). $$
 Because this holds for all~$\ep' > 0$ arbitrary small,
 $$ P (\C) \geq 1 - \lim_{\ep' \to 0} E \,\norm{X - \c} / (\tau - \r - \ep') = 1 - E \,\norm{X - \c} / (\tau - \r). $$
 This completes the proof of Theorem~\ref{th:consensus}.


\bibliographystyle{plain}
\bibliography{biblio.bib}

\end{document}